\numberwithin{equation}{section}
\newtheorem{teo}{Theorem}[section]
\newtheorem{pro}[teo]{Proposition}
\newtheorem{lem}[teo]{Lemma}
\newtheorem{cor}[teo]{Corollary}
\newtheorem{rem}[teo]{Remark}
\title{Free mutual information for two projections}
\author[T. Hamdi]{Tarek Hamdi}
\address{Department of Management Information Systems \\ College of Business Administration \\ Al-Rass, Al-Qassim University \\ Saudi Arabia 
and Laboratoire d'Analyse Math\'ematiques et applications \\ LR11ES11 \\ Universit\'e de Tunis El-Manar \\ Tunisie
}
\email{tarek.hamdi@mail.com}
\begin{document}
\begin{abstract}
The present paper provides a proof of $i^*( \mathbb{C}P+\mathbb{C}(I-P); \mathbb{C}Q+\mathbb{C}(I-Q) )=-\chi_{orb}(P,Q)$ for two projections $P,Q$ without any extra assumptions. An analytic approach is adopted to the proof, based on a subordination result for the liberation process of symmetries associated with $P,Q$.
\end{abstract}
\maketitle

\section{Introduction}
 In classical information theory, the mutual information $I(X,Y)$ between two  random variables $X,Y$  can be formally expressed in terms of their Shannon-Gibbs entropies as follows
 \begin{equation*}
 I(X,Y)=H(X)+H(Y)-H(X,Y).
\end{equation*}
Motivated by the above expression, Voiculescu introduced the so-called free mutual information $i^*$ in \cite{Voi}, via the liberation theory in free probability, as a way  to have invariants to distinguish von Neumann algebras. In this way, Hiai, Miyamoto and Ueda  introduced  in  \cite{Hia-Miy-Ued}, \cite{Ued} the so-called orbital free entropy $\chi_{orb}$  which also plays a role of free analog of mutual  information (see also its new approaches due to Biane and Dabrowski in \cite{Bia-Dab}). 
This quantity  $\chi_{orb}$ has many properties in common with $i^*$, but there is no general relationship between them so far. A few years ago, Collins and Kemp \cite{Col-Kem} gave a proof of the identity 
\begin{equation*}
 i^*\left( \mathbb{C}P+\mathbb{C}(I-P); \mathbb{C}Q+\mathbb{C}(I-Q) \right)=\chi_{orb}(P)+\chi_{orb}(Q)-\chi_{orb}(P,Q)=-\chi_{orb}(P,Q)
\end{equation*} 
for two projections of traces $\frac{1}{2}$ and the same result was subsequently proved by Izumi and Ueda in \cite{Izu-Ued} with a completely independent proof. 
Motivated both by the ideas in \cite{Izu-Ued} and the heuristic argument in \cite[Section 3.2]{Hia-Ued}, 
we developed  in \cite{Tar} a theory of subordination for the liberation process of symmetries which allowed us to establish some partial results generalizing the equality  $i^*=-\chi_{orb}$ for two projections.

Throughout the present notes, let $(\mathscr{A},\tau)$ be a $W^*$-probability space and $U_t, t\in[0,\infty)$ a free unitary Brownian motion in $(\mathscr{A},\tau)$ with $U_0={\bf1}$. 
For given two projections $P,Q$ in $\mathscr{A}$ that are independent of $(U_t)_{t\geq0}$ we denote by $R=2P-{\bf1}$ and $S=2Q-{\bf1}$ the associated symmetries.
Let $a=|\tau(R)-\tau(S)|/2$ and $b=|\tau(R)+\tau(S)|/2$.
In a recent paper (\cite{Tar}), we studied the spectral distribution $\nu_t$ of the unitary operator $RU_tSU_t^*$ for arbitrary value of $a,b$.
The key result proved in \cite{Tar} is the following characteristic equation:
\begin{equation}\label{subor}
H(t,\phi_t(z))^2-H(\infty,\phi_t(z))^2=H(0,z)^2-H(\infty,z)^2
\end{equation}
for given initial data $H(0,z)$ and $t>0$, where $\phi_t$ is a flow  defined on a region $\Omega_t$ of $\mathbb{D}$, the function
\begin{equation*}
H(t,z):=\int_{\mathbb{T}}\frac{\xi+z}{\xi-z}d\nu_t(\xi)
\end{equation*}
is  the  Herglotz transform of the measure $\nu_t$ and 
\begin{equation*}
H(\infty,z)=\sqrt{1+4 z\frac{\alpha \beta \left(1+z\right)^2+ \left(\alpha-\beta \right)^2z}{\left(1-z^2\right)^2}}.
\end{equation*}
Note that this type of result was proved in \cite{Dem-Hmi} in the special case $P=Q$.
In fact, the equality \eqref{subor} turns out to an exact subordination relation (see \cite[Proposition 3.4]{Tar}): 
$K(t,z)=K(0,\eta_t(z))$ for a subordinate function $\eta_t$ (the inverse of $\phi_t$) which extend to a homeomorphism from the closed unit disc $\overline{\mathbb{D}}$ to $\overline{\Omega_t}$ where
 \begin{equation*}
K(t,z):=\sqrt{H(t,z)^2-\left(a\frac{1-z}{1+z}+b\frac{1+z}{1-z} \right)^2}.
\end{equation*}
This subordination relation is used in  \cite[Lemma 4.4]{Izu-Ued} to prove that the equality $i^*=-\chi_{orb}$ holds, for any two projections, under the assumption that $K(t,.)$ define a function of Hardy class $H^{3}(\mathbb{D})$ for any $t>0$. 
Note that the function $H$ there is exactly  our $\frac{1}{4}K^2$ (notation and definitions that are used throughout this paper are the same as in \cite{Tar}). Thus we mainly investigate the boundary behavior of the function $K(t,z)$ in what follows.  Our main result here is the following.
\begin{teo}\label{main}
The equality
$
i^*\left( \mathbb{C}P+\mathbb{C}(I-P); \mathbb{C}Q+\mathbb{C}(I-Q) \right)=-\chi_{orb}\left(P,Q\right)
$
holds for any pair of projections.
\end{teo}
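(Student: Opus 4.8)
The plan is to reduce Theorem \ref{main} to a single analytic regularity statement and then to prove that statement by a boundary analysis of the subordinate function $K$. By \cite[Lemma 4.4]{Izu-Ued}, and since their $H$ is our $\tfrac14 K^2$, the identity $i^*=-\chi_{orb}(P,Q)$ holds for the pair $(P,Q)$ \emph{as soon as} $K(t,\cdot)$ belongs to the Hardy class $H^3(\mathbb{D})$ for every $t>0$. Accordingly I would devote the entire proof to establishing the uniform bound
\[
\sup_{r<1}\int_{\mathbb{T}}\bigl|K(t,re^{i\theta})\bigr|^3\,d\theta<\infty,\qquad t>0 .
\]

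The first step is to exploit the exact subordination $K(t,z)=K(0,\eta_t(z))$, where $\eta_t\colon\overline{\mathbb{D}}\to\overline{\Omega_t}$ is the homeomorphism recalled above, in order to pass to boundary values: $K(t,e^{i\theta})=K(0,\eta_t(e^{i\theta}))$, so that the integral above is controlled by an integral along $\partial\Omega_t$. I would then localise. For $t>0$ the regularising effect of the free unitary Brownian motion renders $\nu_t$ absolutely continuous except possibly at the two distinguished points $z=\pm1$, with a density that vanishes like a square root at the edges of its support; hence $H(t,\cdot)$ has bounded boundary values there, and since $g(z):=a\frac{1-z}{1+z}+b\frac{1+z}{1-z}$ is bounded away from $\pm1$, the function $K(t,\cdot)$ is bounded on every arc that avoids $\pm1$. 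The whole difficulty is thus concentrated on two small arcs around $z=1$ and $z=-1$, and the problem becomes the \emph{local} integrability of $|K(t,e^{i\theta})|^3$ there.

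The heart of the matter is the behaviour at $z=\pm1$. The rational function $g$ has double poles at these points, $g(z)^2\sim 4b^2/(1-z)^2$ as $z\to1$ and $g(z)^2\sim 4a^2/(1+z)^2$ as $z\to-1$. On the other hand $\pm1$ are fixed points of the flow $\phi_t$, and the characteristic equation \eqref{subor} together with the explicit profile $H(\infty,\cdot)$ forces $H(t,\cdot)$ to carry at $\pm1$ exactly the same double pole as $g$ --- in probabilistic terms, the traces of $R$ and $S$ produce persistent atoms of $\nu_t$ at $\pm1$ whose masses are precisely those needed for the cancellation. Consequently the leading double poles cancel in $K(t,z)^2=H(t,z)^2-g(z)^2$. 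The decisive point that remains is to control the \emph{subleading} singularity: I must show that the residual simple pole of $K^2$ also vanishes and that the next correction is of square-root type, so that $K(t,z)^2=O\bigl((1-z)^{-1/2}\bigr)$, whence $K(t,z)=O\bigl((1-z)^{-1/4}\bigr)$ and $|K(t,e^{i\theta})|^3=O(|\theta|^{-3/4})$ is locally integrable near $z=1$ (and symmetrically near $z=-1$). I expect to read off this expansion from a Puiseux development of $H(t,\cdot)$ at $\pm1$, obtained by expanding \eqref{subor} at the boundary fixed points of $\phi_t$, the square-root singularity already present in $H(\infty,\cdot)$ being the source of the favourable exponent $1/4$.

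The main obstacle is precisely this last expansion. The cancellation of the double poles is structural and follows automatically once the atom masses are matched with $a$ and $b$; what is genuinely delicate is ruling out a residual simple pole of $K^2$: for a \emph{generic} measure carrying an atom at $\pm1$ such a pole is present and would give $K\sim(1-z)^{-1/2}$, hence $|K|^3\sim|\theta|^{-3/2}\notin L^1$ and failure of $H^3$. Showing that the very particular measure $\nu_t$ produced by \eqref{subor} has no such residual pole --- equivalently, that the principal-value contribution of its absolutely continuous density at $\pm1$ exactly compensates the subleading term of $g^2$ --- is the crux of the argument. I anticipate that the regularity of the subordination map $\eta_t$ at its boundary fixed points $\pm1$, in particular a square-root-type behaviour of $\phi_t$ there that further softens the singularity, is the ingredient that ultimately delivers an exponent strictly below $1/3$ and closes the proof.
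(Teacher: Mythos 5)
Your reduction of Theorem \ref{main} to the Hardy-class criterion of Izumi--Ueda (via the identification of their $H$ with $\tfrac14 K^2$) is the same first step as the paper's, but the analytic core of your argument is absent. The step you yourself call the crux --- the Puiseux expansion of $H(t,\cdot)$ at $\pm1$ obtained from \eqref{subor}, ruling out a residual simple pole of $K^2$ and producing the exponent $1/4$ --- is nowhere carried out: ``I expect to read off'' and ``I anticipate'' are not proofs, and as you concede, for a generic measure with atoms at $\pm1$ the statement is false, so the entire proof hinges on exactly the computation you omit. The same is true of the localisation step: the claim that $\nu_t$ is absolutely continuous away from $\pm1$ with a square-root-vanishing density whose Herglotz transform has bounded boundary values is asserted, not proven, and it is essentially the regularity one is trying to establish.

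Moreover, the geometric premise behind your local analysis at $\pm1$ is incorrect, and this misplaces the difficulty. When $a\neq0$ and $b\neq0$, the points $\pm1$ are not fixed points of the flow: by \cite[Lemma 3.7]{Tar} (invoked in Corollary \ref{Phi}), the boundary $\partial\Omega_t=\eta_t(\mathbb{T})$ avoids $\pm1$, and $\eta_t(\pm1)=x(t)_\pm\in(-1,1)$ are \emph{interior} points of $\mathbb{D}$. Consequently, near $z=\pm1$ the subordination $K(t,z)=K(0,\eta_t(z))$ composes the function $K(0,\cdot)$, analytic at $x(t)_\pm$, with a continuous map, so $K(t,\cdot)$ is automatically bounded and continuous there; no cancellation of double poles, and no control of subleading singularities, is needed at all. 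The genuine work lies on the rest of $\partial\Omega_t\cap\mathbb{T}$, which the paper handles through the L\"owner flow \eqref{Low1}: the estimate $\bigl|\Im \int_0^t H(s,\phi_s(z))\,ds\bigr|\leq \frac{-2|\phi_t(z)|\ln|z|}{1-|\phi_t(z)|^2}$ permits taking logarithms in \eqref{flow1}, the integration by parts \eqref{iip} transfers continuity to $H(t,\phi_t(\cdot))$, and Corollary \ref{Phi} then yields a continuous extension of $K(0,\cdot)$ to $\partial\Omega_t\cap\mathbb{T}$ (Proposition \ref{extension}). This gives $K(t,\cdot)\in H^\infty(\mathbb{D})$, which is both stronger than the $H^3$ bound you aim for and obtained without any expansion at $\pm1$; to salvage your route you would have to supply the missing expansion, whereas redirecting your boundary analysis through the fact $\eta_t(\pm1)\in\mathbb{D}$ would collapse it onto the paper's proof.
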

The paper has four sections including this introduction. Section 2 contains remainder of  the main results proved in \cite{Tar} and preliminaries about boundary results associated with subordinate functions.
Section 3 deals with regularity properties of the regions $\Omega_t$. Section 4 gives a proof of the Theorem \ref{main}.

\section{Remainder and preliminaries}

We studied in \cite{Tar} the relationship between the spectral distributions $\mu_t$ and $\nu_t$   of the self adjoint operator $PU_tQU_t^*$ and the unitary operator $RU_tSU_t^*$  where the projections $\{P,Q\}$ and the  symmetries $\{R,S\}$ are associated in $ \mathscr{A}$ and freely independent from $U_t$. 
Let $\psi(t,z)$ be the moments generating function of the spectral measure $\mu_t$ and set $H_{\mu_t}(z)=1+2\psi(t,z)$. From \cite[Corollary 4.2]{Tar}, the  Herglotz transform of the measure $\nu_t$ satisfies
\begin{equation}\label{relationship}
 H(t,z)=\frac{z-1}{z+1}H_{\mu_t}\left(\frac{4z}{(1+z)^2}\right)-2(\alpha+\beta)\frac{z}{z^2-1}
\end{equation}
 where $\alpha=\tau(R)$ and $\beta=\tau(S)$. Thus, according to \cite[Theorem 1.4]{Col-Kem}, $H(t,z)$ is analytic in both $z\in\mathbb{D}$ and $t>0$. Moreover, from  \cite[Proposition 2.3]{Tar}, $H(t,z)$ solves the pde
 \begin{equation}\label{pde}
\partial_tH+\frac{z}{2}\partial_zH^2= \frac{2 z \left(\alpha z^2+2 \beta z+\alpha\right) \left(\beta z^2+2  \alpha z+\beta\right)}{\left(1-z^2\right)^3}.
\end{equation}
 Using the method of characteristic, we deduce the existence of a subordinator function $\phi_t$ satisfying
 the following coupled ordinary differential equations (ode for short)
\begin{equation}\label{Low1}
\partial_t\phi_{t} = \phi_{t} H(t,\phi_{t}), \quad \phi_{0}(z) = z,
\end{equation}
\begin{equation}\label{Low2}
\partial_t \left[H(t,\phi_{t})\right] =\frac{4(\alpha^2+\beta^2)\phi_t^2(1+\phi_t^2)+2\alpha\beta \phi_t(1+6\phi_t^2+\phi_t^4)}{(1-\phi_t^2)^{3}}.
\end{equation}
Recall that the ode \eqref{Low1} is nothing else but the L\"owner equation determined by the 1-parameter family of probability measures $t\mapsto \nu_t$. Then  (see  \cite[Theorem 4.14]{Law}) it defines a unique 1-parameter family of conformal transformations $\phi_t$ from $\Omega_{t}:=\{z\in\mathbb{D}: t<T_z\}$ onto $\mathbb{D}$ with $\phi_t(0)=0$ and $\partial_z\phi_t(0)=e^t$, where $T_z$ is the supremum of all $t$ such that $\phi_t(z)\in\mathbb{D}$ for fixed $z\in\mathbb{D}$. 
Integrating the ode \eqref{Low1}, we get
\begin{align}\label{exp}
\phi_t(z)=z\exp\left(\int_0^tH(s,\phi_s(z))ds\right).
\end{align}

Its known that (see, e.g., \cite[Remark 4.15]{Law}) $\phi_t$ is invertible and the inverse $\eta_t:=\phi_t^{-1}$ from $\mathbb{D}$ onto $\Omega_t$ solves the radial L\"owner pde:
\begin{equation*}
\partial_t\eta_t(z)=-z\partial_z\eta_t(z)H(t,z), \quad \eta_0(z)=z.
\end{equation*}
 The function $\phi_t$ satisfies the properties in \cite[Theorem 4.4, Proposition 4.5]{Bel-Ber}. Thus, we have 
\begin{pro}\cite{Bel-Ber}\label{Bercovici}
\begin{enumerate}
\item $\eta_t$ extends continuously to the boundary of $\mathbb{D}$, and $\eta_t$ is one-to-one on $\overline{\mathbb{D}}$. 
\item $\Omega_{t}$ is a simply connected domain bounded by a simple closed curve. This domain coincide with $\eta_t( \mathbb{D})$ and its boundary is $\eta_t(\mathbb{T})$.
\item If $\zeta\in\mathbb{T}$ satisfies $\eta_t(\zeta)\in \mathbb{D}$,  $\eta_t$ can be continued analytically to a neighborhood of $\zeta$.
\item A point $\zeta\in\mathbb{T}$ belong to the boundary of $\Omega_{t}$, if and only if the limit $l_t=\phi_t(\zeta)=\lim_{r\rightarrow1^-}\phi_t(r\zeta)$ exists,  $l_t\in\mathbb{T}$ and $\frac{\zeta}{l_t}\partial_z\phi_t(\zeta)\in[0,1)$.
\item If $\zeta\in\mathbb{T}\cap \overline{\Omega_{t}}$ and $\frac{\zeta}{l_t}\partial_z\phi_t(\zeta)>0$ then $\phi_t(r\zeta)$ approaches $\phi_t(\zeta)\in\mathbb{T}$ nontangentially as $r\rightarrow1^-$.
\end{enumerate}
\end{pro}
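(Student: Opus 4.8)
The plan is to read Proposition~\ref{Bercovici} as a package of boundary statements for the radial L\"owner flow \eqref{Low1} and its inverse $\eta_t$, and to prove them from the positivity of $H$ together with the classical boundary theory of conformal maps (the Carath\'eodory extension theorems and the Julia--Wolff--Carath\'eodory theorem), the analytic input being the regularity of $\nu_t$ coming from \cite{Tar}. First I would fix the framework. Since $H(t,\cdot)$ is the Herglotz transform of a probability measure we have $\mathrm{Re}\,H(t,z)>0$ on $\mathbb{D}$ and $H(t,0)=1$, so \eqref{exp} gives $|\phi_t(z)|=|z|\exp\!\left(\int_0^t\mathrm{Re}\,H(s,\phi_s(z))\,ds\right)\ge|z|$. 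Hence trajectories move outward, $\{\Omega_t\}_{t\ge0}$ is a nested decreasing family with $\Omega_0=\mathbb{D}$, and $\eta_t=\phi_t^{-1}\colon\mathbb{D}\to\Omega_t$ is a univalent self-map of $\mathbb{D}$ fixing $0$ with $\eta_t'(0)=e^{-t}<1$. In particular $\Omega_t=\eta_t(\mathbb{D})$ is a univalent image of the disc, which is the simple connectivity asserted in~(2), and the Schwarz lemma yields the strict contraction $|\eta_t(z)|<|z|$ for $z\ne0$, the mechanism behind the bounds in~(4)--(5).

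To obtain the continuous injective extension in~(1) and the Jordan description in~(2), by Carath\'eodory's theorems it is enough to show that $\partial\Omega_t$ is a Jordan curve: local connectivity of the boundary forces continuous extension of $\eta_t$ to $\overline{\mathbb{D}}$, and a simple boundary curve forces injectivity there. The analytic input is the regularity of $\nu_t$ established in \cite{Tar}: for $t>0$ this measure is absolutely continuous, with density real-analytic on the interior of its support and with only finitely many edge points, as is visible from \eqref{relationship} and the analyticity of $H(t,\cdot)$ recorded after it. Consequently $H(t,\cdot)$ extends continuously to $\mathbb{T}$ and analytically across each gap arc, so the driving field $z\mapsto zH(t,z)$ is Lipschitz up to $\mathbb{T}$ away from the finitely many edges; the continuous dependence of the trajectories on their data up to the boundary then shows that $\eta_t(\mathbb{T})$ is a simple closed curve and that $\Omega_t=\eta_t(\mathbb{D})$ has boundary $\eta_t(\mathbb{T})$.

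For~(3), if $\zeta\in\mathbb{T}$ satisfies $\eta_t(\zeta)\in\mathbb{D}$, then $\eta_t(\zeta)$ is a boundary point of $\Omega_t$ lying \emph{strictly inside} the disc, reached by a trajectory meeting $\mathbb{T}$ exactly at time $t$; since $H(t,\cdot)$ is analytic throughout $\mathbb{D}$, the field is analytic near $\eta_t(\zeta)$ and analytic dependence of the flow on initial conditions continues $\eta_t$ analytically across $\zeta$. Statements~(4) and~(5) are then the Julia--Wolff--Carath\'eodory theorem applied to $\eta_t$ and $\phi_t$: a point $\zeta\in\mathbb{T}$ lies on $\partial\Omega_t$ exactly when the radial limit $l_t=\lim_{r\to1^-}\phi_t(r\zeta)$ exists and lies on $\mathbb{T}$, Julia's lemma produces the angular derivative, and the contraction $|\eta_t(z)|\le|z|$ forces the boundary dilation of $\eta_t$ at $l_t$ to be at least $1$; the Julia--Wolff--Carath\'eodory normalization for this non-automorphism then yields, after inversion, the estimate $\frac{\zeta}{l_t}\partial_z\phi_t(\zeta)\in[0,1)$. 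When this quantity is strictly positive the angular derivative is finite and nonzero, precisely the case in which Julia--Wolff--Carath\'eodory gives nontangential convergence of $\phi_t(r\zeta)$ to $\phi_t(\zeta)\in\mathbb{T}$, establishing~(5).

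I expect the main obstacle to be~(1)--(2), namely showing that $\eta_t(\mathbb{T})$ is a genuine Jordan curve: one must rule out that the shrinking domains $\Omega_t$ pinch or develop self-contacts identifying two distinct prime ends. This cannot be deduced from the mere positivity $\mathrm{Re}\,H>0$ and requires the fine regularity of $\nu_t$ (analytic density, square-root edge behaviour) from \cite{Tar}; it is exactly this point that is abstracted into the general boundary results of \cite{Bel-Ber} cited for Proposition~\ref{Bercovici}.
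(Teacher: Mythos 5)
Your items (3)--(5) are sketched along broadly sensible lines, but first note what the paper actually does with this statement: it contains no internal proof at all. Proposition \ref{Bercovici} is imported wholesale from \cite[Theorem 4.4, Proposition 4.5]{Bel-Ber}; the only in-paper content is the observation that the L\"owner flow \eqref{Low1} generated by the Herglotz field of $t\mapsto\nu_t$ places $\phi_t$ and $\eta_t=\phi_t^{-1}$ within that framework (via \cite{Law}). So the benchmark is Belinschi--Bercovici's argument, and against it your treatment of (4) is also incomplete: the statement is an equivalence, and you only argue the direction ``boundary point $\Rightarrow$ radial limit on $\mathbb{T}$ with derivative in $[0,1)$''; the converse is never addressed.

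The genuine gap, however, is in your proof of (1)--(2), and it is twofold. First, the analytic input you invoke --- that $\nu_t$ is absolutely continuous with real-analytic density and finitely many edge points, so that $H(t,\cdot)$ extends continuously to $\mathbb{T}$ --- is nowhere established (\eqref{relationship} is an algebraic identity between generating functions and gives analyticity only inside $\mathbb{D}$), and the continuity claim is actually \emph{false} whenever $(a,b)\neq(0,0)$: the measure $\nu_t$ retains atoms $a\delta_{-1}+b\delta_{1}$, which is the entire reason the paper works with $K(t,z)$, obtained by subtracting the square of $a\frac{1-z}{1+z}+b\frac{1+z}{1-z}$ (the Herglotz transform of $a\delta_{-1}+b\delta_1$), and why $H(\infty,z)$ blows up at $z=\pm1$. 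Since $\phi_t(x(t)_\pm)=\pm1$, the flow does reach these singular points, so your ``field Lipschitz up to $\mathbb{T}$ away from finitely many edges'' mechanism fails precisely where the paper must be careful (cf.\ Corollary \ref{Phi} and \cite[Lemma 3.7]{Tar}). Second, even granting boundary regularity of the field, continuous dependence of trajectories on initial data yields at most a continuous extension of the flow; it does not exclude self-contacts of $\eta_t(\mathbb{T})$, i.e., it gives no injectivity of $\eta_t$ on $\overline{\mathbb{D}}$. You acknowledge this ``pinching'' issue in your closing paragraph and then defer ``exactly this point'' to the general results of \cite{Bel-Ber} --- but that concedes the very statement being proved, making the proposal circular at its core: the Jordan property of $\partial\Omega_t$ is the hard content of \cite[Theorem 4.4]{Bel-Ber}, obtained there by a direct analysis of the subordination functions and Julia--Carath\'eodory-type arguments, not from fine regularity of the density of $\nu_t$.
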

Here is a sample application.
\begin{cor}\label{Phi}
The function
\begin{equation*}
\Phi_t(z):=a\frac{1-\phi_t(z)}{1+\phi_t(z)}+b\frac{1+\phi_t(z)}{1-\phi_t(z)}
\end{equation*}
has a continuous extension to $\mathbb{T}\cap \overline{\Omega_{t}}$.
\end{cor}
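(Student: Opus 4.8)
The plan is to write $\Phi_t$ as a composition and to transport the boundary regularity of $\phi_t$ through a fixed rational function. Set
\[
\Psi(w):=a\frac{1-w}{1+w}+b\frac{1+w}{1-w},
\]
so that $\Phi_t=\Psi\circ\phi_t$. The map $\Psi$ is rational, with poles only at $w=-1$ (present when $a\neq0$) and $w=1$ (present when $b\neq0$), both of which lie on $\mathbb{T}$; hence $\Psi$ is continuous from $\overline{\mathbb{D}}$ into the Riemann sphere $\widehat{\mathbb{C}}$, and $\mathbb{C}$-valued and continuous on $\overline{\mathbb{D}}\setminus\{-1,1\}$.

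The key step is to upgrade Proposition \ref{Bercovici} to continuity of $\phi_t$ on the whole of $\overline{\Omega_t}$. By item (1), $\eta_t$ is continuous on the compact set $\overline{\mathbb{D}}$ and injective there, while by item (2) its image is $\eta_t(\mathbb{D})\cup\eta_t(\mathbb{T})=\Omega_t\cup\partial\Omega_t=\overline{\Omega_t}$. Since a continuous bijection from a compact space onto a Hausdorff space is a homeomorphism, $\eta_t\colon\overline{\mathbb{D}}\to\overline{\Omega_t}$ is a homeomorphism and its inverse $\phi_t=\eta_t^{-1}$ is continuous on $\overline{\Omega_t}$. Composing with $\Psi$, the function $\Phi_t=\Psi\circ\phi_t$ is then continuous on $\overline{\Omega_t}$ as a $\widehat{\mathbb{C}}$-valued map, and its restriction to $\mathbb{T}\cap\overline{\Omega_t}$ is the sought continuous extension.

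The only delicate point, and the one I expect to require care, is the behavior near the poles of $\Psi$: a boundary point $\zeta\in\mathbb{T}\cap\overline{\Omega_t}$ with $\phi_t(\zeta)\in\{-1,1\}$ forces $\Phi_t(\zeta)=\infty$. Such points are harmless for a $\widehat{\mathbb{C}}$-valued extension, since continuity there is already built into the composition above. If instead a finite-valued extension is wanted on a given arc of $\mathbb{T}\cap\overline{\Omega_t}$, one must verify that the two exceptional preimages $\eta_t(\pm1)$ do not lie on that arc; I would establish this by combining the continuity of $\phi_t$ just proved with the nontangential-approach and derivative conditions recorded in Proposition \ref{Bercovici}(4)--(5).
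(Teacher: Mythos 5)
Your reduction $\Phi_t=\Psi\circ\phi_t$ and the homeomorphism upgrade of Proposition \ref{Bercovici} (a continuous injection of the compact set $\overline{\mathbb{D}}$ onto the Hausdorff space $\overline{\Omega_t}$ is a homeomorphism, so $\phi_t=\eta_t^{-1}$ is continuous on $\overline{\Omega_t}$) are correct, and this much matches the paper. But the actual content of the corollary is not the continuity of the composition as a sphere-valued map; it is finiteness, i.e.\ the fact that $\phi_t$ omits the values $\pm1$ on $\mathbb{T}\cap\overline{\Omega_{t}}$. This is precisely the point you defer to the end, and the route you propose for closing it cannot work: items (4)--(5) of Proposition \ref{Bercovici} are generic facts about radial L\"owner flows, valid at every point of $\mathbb{T}\cap\overline{\Omega_{t}}$, and they place no constraint on where the two preimages $\eta_t(\pm1)$ sit. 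For a general radial L\"owner flow these preimages can perfectly well lie on $\mathbb{T}$: if the driving measures were all equal to $\delta_{-1}$, then $H(s,z)=\frac{1-z}{1+z}$, the point $1$ is a fixed point of the flow \eqref{Low1}, and $\eta_t(1)=1\in\mathbb{T}$, so the pole of $\Psi$ at $w=1$ would be attained on the circle. Hence no amount of general L\"owner theory closes the gap; one needs problem-specific information about this particular flow.

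That information is what the paper's proof invokes: by \cite[Lemma 3.7]{Tar}, the boundary $\partial\Omega_t=\eta_t(\mathbb{T})$ does not contain $\pm1$; more precisely $\partial\Omega_t$ meets the real axis at two points $x(t)_\pm$ with $(x(t)_-,x(t)_+)\subset(-1,1)$ and $\phi_t(x(t)_\pm)=\pm1$. Since $\phi_t$ is injective on $\overline{\Omega_t}$, these real points, which are interior to $\mathbb{D}$, are the \emph{only} preimages of $\pm1$, so $\phi_t\neq\pm1$ on $\mathbb{T}\cap\overline{\Omega_{t}}$ and $\Phi_t$ is finite and continuous there. Note also that the $\widehat{\mathbb{C}}$-valued extension you offer as a fallback is not adequate for the way the corollary is used later: in Section 4 one writes $K(0,z)^2=H(t,\phi_t(z))^2-\Phi_t(z)^2$ and needs $\Phi_t$ to be finite on $\mathbb{T}\cap\overline{V_t}$ in order to conclude that $K(0,\cdot)$ extends continuously; allowing $\Phi_t=\infty$ at a boundary point would break that argument. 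So your proposal proves a strictly weaker statement, and the missing ingredient is exactly the citation of \cite[Lemma 3.7]{Tar}.
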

\begin{proof}
By Proposition \ref{Bercovici}, $\phi_t$ has a continuous extension to $\mathbb{T}\cap \overline{\Omega_{t}}$. Assume both $a\neq0$ and $b\neq0$, then according to \cite[Lemma 3.7]{Tar} the boundary $\partial{\Omega_t}=\eta_t(\mathbb{T})$ does not contain the points $\pm1$. More precisely, the boundary $\partial\Omega_t$ intersect the x-axis at two points $x(t)_\pm$ from either side of the origin with $(x(t)_-,x(t)_+)\subset(-1,1)$ and $\phi_t(x(t)_\pm)=\pm1$. 
Thus $\phi_t(z)$ does not take the values $\pm1$ on $\mathbb{T}\cap \overline{\Omega_{t}}$ and the result follows immediately.
\end{proof}

Finally  we notice that, by  \cite[Proposotion 3.5]{Tar}, the transformation $\phi_t$  coincides on the interval $\mathbb{R}\cap\Omega_t=(x(t)_-,x(t)_+)$ with
\begin{align*}
\frac{\left(\sqrt{\left(b^2-a^2-c-d e^{t\sqrt{c}}\right)^2-4a^2c}-\sqrt{\left(b^2-a^2+c-d e^{t\sqrt{c}}\right)^2-4b^2c}\right)^2}{4cde^{t\sqrt{c}}},
\end{align*}
where $c=c(z):=K(0,z)^2+ (a+b)^2$ and
\begin{align*}
d(z)=\frac{1}{x}\left[b^2x^2-\left(\sqrt{c}-\sqrt{c-(c-a^2+b^2)x+b^2x^2}\right)^2 \right],\quad x=\frac{-4z}{(1-z)^2}.
\end{align*}

\section{Regularity properties of $\Omega_t$}

Recall from Proposition \ref{Bercovici} that $\Omega_{t}$ is simply connected and its boundary is a simple closed curve.
We use here polar coordinates to provide explicit descriptions for $\Omega_{t}$  and its boundary.
 The following result shows that $(\Omega_t)_{t>0}$ is decreasing on $\mathbb{D}$.
\begin{lem} Given $0<s<t$, then $\Omega_t\subset\Omega_s\subset\mathbb{D}$.
\end{lem}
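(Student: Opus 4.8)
The plan is to reduce the nested inclusion to the monotonicity of the modulus $|\phi_t(z)|$ along the L\"owner flow. The inclusion $\Omega_s\subset\mathbb{D}$ is immediate, since by definition $\Omega_s=\{z\in\mathbb{D}: s<T_z\}$ is already a subset of $\mathbb{D}$. For the inclusion $\Omega_t\subset\Omega_s$ the heart of the matter is to show that, for each fixed $z\in\mathbb{D}$, the set of times at which the orbit stays inside $\mathbb{D}$ is an interval $[0,T_z)$, so that $T_z$ is a genuine first-exit time; once this is established the nesting is a direct comparison of $t$ with $T_z$.

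First I would take moduli in the integrated form \eqref{exp} to obtain
\begin{equation*}
|\phi_t(z)|=|z|\exp\left(\int_0^t\mathrm{Re}\,H(s,\phi_s(z))\,ds\right),
\end{equation*}
valid for all $t$ such that $\phi_s(z)\in\mathbb{D}$ for every $0\le s\le t$. I would then invoke the fact that $H(s,\cdot)$ is the Herglotz transform of the probability measure $\nu_s$ on $\mathbb{T}$, so that its real part is the Poisson integral
\begin{equation*}
\mathrm{Re}\,H(s,w)=\int_{\mathbb{T}}\frac{1-|w|^2}{|\xi-w|^2}\,d\nu_s(\xi)\ge 0,\qquad w\in\mathbb{D}.
\end{equation*}
Consequently $t\mapsto|\phi_t(z)|$ is non-decreasing as long as $\phi_t(z)\in\mathbb{D}$.

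This monotonicity shows that once the orbit reaches $\mathbb{T}$ it cannot return to $\mathbb{D}$, so that $\{t\ge0:\phi_t(z)\in\mathbb{D}\}=[0,T_z)$ and $T_z$ is exactly the escape time. From here the conclusion is immediate: if $z\in\Omega_t$, i.e.\ $t<T_z$, then for $0<s<t$ we also have $s<T_z$, whence $z\in\Omega_s$; equivalently, for $z\in\Omega_t$ the estimate $|\phi_s(z)|\le|\phi_t(z)|<1$ for all $s\le t$ forces $\phi_s(z)\in\mathbb{D}$, so $z\in\Omega_s$. This yields $\Omega_t\subset\Omega_s\subset\mathbb{D}$, and the polar description of the boundary $\eta_t(\mathbb{T})$ makes the geometric picture of a shrinking family transparent.

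I expect the only delicate point to be the interval property of the exit times, namely ruling out that the orbit leaves and later re-enters $\mathbb{D}$; this is precisely what the sign of $\mathrm{Re}\,H$ secures, and the positivity of the Poisson kernel (a structural consequence of $\nu_t$ being a probability measure) is the crucial input. Everything else is bookkeeping with the definition of $\Omega_t$.
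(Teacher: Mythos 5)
Your proof is correct and takes essentially the same route as the paper: both arguments take moduli in the integrated L\"owner equation \eqref{exp} and use the positivity of $\Re H(s,\cdot)$ (the Poisson integral of the probability measure $\nu_s$) to conclude that $t\mapsto|\phi_t(z)|$ is nondecreasing, which immediately gives $\Omega_t\subset\Omega_s\subset\mathbb{D}$. Your additional care in verifying that $\{t\ge 0:\phi_t(z)\in\mathbb{D}\}$ is an interval, so that $T_z$ is a genuine first-exit time, is a point the paper leaves implicit, but it is the same underlying argument.
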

\begin{proof} 
Since $\Re H(u,\phi_u(z))>0$ for any $u>0$, we have
\begin{align*}
\int_0^s\Re H(u,\phi_u(z))du\leq \int_0^t\Re H(u,\phi_u(z))du
\end{align*}
for $s<t$.
Thus, $|\phi_s(z)|\leq |\phi_t(z)|$ and hence $\Omega_{t}\subset\Omega_{s}$.
\end{proof}
For fixed $\zeta\in\mathbb{T}$ and $r\in(0,1)$, define
\begin{align*}
h_t(r,\zeta)=1-\int_0^t\frac{1-|\phi_s(r\zeta)|^2}{-\ln r}\int_{\mathbb{T}}\frac{1}{|\xi-\phi_s(r\zeta)|^2}d\nu_s(\xi)ds.
\end{align*}
Then, we have 
\begin{align}\label{logmod}
\ln \left|\phi_t(r\zeta)\right|=\ln r+\Re\int_0^tH(s,\phi_s(r\zeta))ds=(\ln r) h_t(r,\zeta).
\end{align}
To study the boundary of $\Omega_t$, we need the following result.
\begin{lem} \label{limit}
Given $t>0$ and $e^{i\theta}\in\mathbb{T}\cap\overline{\Omega_{t}}$. Let $\phi_t'(z)=\partial_z\phi_t(z)$, then 
\begin{align*}
\lim_{r\rightarrow 1^-}\frac{1-|\phi_t(re^{i\theta})|^2}{-\ln r}=\frac{2e^{i\theta}}{\phi_t(e^{i\theta})}\phi_t'(e^{i\theta})\in[0,2).
\end{align*}
\end{lem}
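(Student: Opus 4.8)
The plan is to peel off an elementary factor, reducing the statement to the computation of a boundary dilation coefficient, and then to invoke the Julia--Carath\'eodory theorem applied to the inverse map $\eta_t=\phi_t^{-1}$, which is legitimately a holomorphic self-map of $\mathbb{D}$ since $\Omega_t\subset\mathbb{D}$.

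First I would carry out the reduction. Fix $\zeta=e^{i\theta}\in\mathbb{T}\cap\overline{\Omega_t}$ and write $w=\phi_t(r\zeta)$. By Proposition \ref{Bercovici}(4) the radial limit $l_t=\phi_t(\zeta)=\lim_{r\to1^-}w$ exists and lies on $\mathbb{T}$, so $1+|w|\to2$. Factoring $1-|w|^2=(1-|w|)(1+|w|)$ and using $\lim_{r\to1^-}\frac{1-r}{-\ln r}=1$, one obtains
\begin{equation*}
\lim_{r\to1^-}\frac{1-|\phi_t(r\zeta)|^2}{-\ln r}=2\lim_{r\to1^-}\frac{1-|\phi_t(r\zeta)|}{1-r},
\end{equation*}
once the right-hand limit is shown to exist. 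Equivalently, \eqref{logmod} gives $|\phi_t(r\zeta)|^2=r^{2h_t(r,\zeta)}$, whence the left-hand side equals $2\lim_{r\to1^-}h_t(r,\zeta)$; either route leads to the same surviving quantity.

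Second I would identify this quantity as an angular derivative. The inverse $\eta_t\colon\mathbb{D}\to\Omega_t\subset\mathbb{D}$ is a holomorphic self-map of $\mathbb{D}$ fixing the origin, with $\eta_t(l_t)=\zeta\in\mathbb{T}$. The Julia--Carath\'eodory theorem at $l_t$ then gives that the nontangential limit of $\frac{1-|\eta_t(w)|}{1-|w|}$ as $w\to l_t$ equals $\frac{l_t\,\eta_t'(l_t)}{\zeta}$. Changing variables $w=\phi_t(r\zeta)$, so that $\eta_t(w)=r\zeta$ and $|\eta_t(w)|=r$, and using Proposition \ref{Bercovici}(5) to ensure that $w\to l_t$ \emph{nontangentially} in the nondegenerate case $\frac{\zeta}{l_t}\phi_t'(\zeta)>0$, I invert the ratio and use $\eta_t'(l_t)\phi_t'(\zeta)=1$ to conclude
\begin{equation*}
\lim_{r\to1^-}\frac{1-|\phi_t(r\zeta)|}{1-r}=\frac{\zeta\,\phi_t'(\zeta)}{l_t}=\frac{\zeta}{\phi_t(\zeta)}\phi_t'(\zeta).
\end{equation*}
Multiplying by the factor $2$ from the first step yields the asserted formula. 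The membership in $[0,2)$ is then immediate, as Proposition \ref{Bercovici}(4) states precisely that $\frac{\zeta}{l_t}\phi_t'(\zeta)\in[0,1)$.

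The step I expect to be the main obstacle is the degenerate boundary case $\frac{\zeta}{l_t}\phi_t'(\zeta)=0$, where Proposition \ref{Bercovici}(5) no longer supplies the nontangential approach needed to apply Julia--Carath\'eodory directly. Here I would argue separately: $\phi_t'(\zeta)=0$ forces the angular derivative of $\eta_t$ at $l_t$ to be infinite, so $\liminf_{w\to l_t}\frac{1-|\eta_t(w)|}{1-|w|}=+\infty$; since the lower limit is infinite the full limit along \emph{every} approach, in particular along the image of the radial segment, is infinite, whence $\frac{1-|\phi_t(r\zeta)|}{1-r}\to0$, matching the claimed value. A minor point to verify along the way is that $r\zeta\in\Omega_t$ for $r$ close to $1$, so that $\phi_t(r\zeta)$ is defined; this is part of the characterization of radially accessible boundary points in Proposition \ref{Bercovici}(4)--(5).
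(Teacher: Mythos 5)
Your strategy---reduce to the ratio $\frac{1-|\phi_t(r\zeta)|}{1-r}$ and read it off from the Julia--Carath\'eodory theorem applied to $\eta_t$---is genuinely different from the paper's, but as written it contains unproved steps, one of which is a genuine gap. In the nondegenerate case you invoke Julia--Carath\'eodory at $l_t$ as though the finite angular derivative $\eta_t'(l_t)$ were already known to exist, and you use the reciprocity $\eta_t'(l_t)\phi_t'(\zeta)=1$. Neither is off-the-shelf: $\phi_t$ is not a self-map of $\mathbb{D}$ (it is only defined on $\Omega_t$), and $\phi_t'(\zeta)$ is defined only as a \emph{radial} limit of difference quotients, so Julia--Carath\'eodory says nothing about it directly. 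This part is repairable: writing $w_r=\phi_t(r\zeta)$ and $L=\frac{\zeta}{l_t}\phi_t'(\zeta)>0$, Proposition \ref{Bercovici}(5) puts $w_r$ in a Stolz angle, so $|w_r-l_t|\le M\bigl(1-|w_r|\bigr)$, whence $\frac{1-|\eta_t(w_r)|}{1-|w_r|}=\frac{1-r}{1-|w_r|}\le M\frac{1-r}{|l_t-w_r|}\to M/L<\infty$; this makes the J--C liminf finite, and the angular derivative is then identified as $\zeta/(l_tL)$ by taking difference quotients along this nontangential curve. The serious gap is the degenerate case: the claim that $\phi_t'(\zeta)=0$ \emph{forces} the angular derivative of $\eta_t$ at $l_t$ to be infinite is not a consequence of Julia--Carath\'eodory. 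Precisely because Proposition \ref{Bercovici}(5) no longer guarantees that $w_r\to l_t$ nontangentially, a finite angular derivative of $\eta_t$ at $l_t$ is not obviously incompatible with the radial limit $L$ being $0$; proving your implication requires the finer isogonality theory of conformal maps at boundary points, which is heavier than Lemma \ref{limit} itself. The case can be closed without that claim: if the J--C liminf $\delta$ is infinite, your argument works; if $\delta<\infty$, Julia's lemma at $l_t$ applied to $w=w_r$ gives $\frac{1-r}{1+r}\le\delta\,\frac{|l_t-w_r|^2}{1-|w_r|^2}$, and $|l_t-w_r|=o(1-r)$ then yields $1-|w_r|^2=o(1-r)$, which is exactly the desired limit $0$. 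So your architecture can be completed, but not by the argument you gave.

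For comparison, the paper's proof needs none of this machinery and treats both cases at once. It quotes \cite[Proposition 4.5]{Bel-Ber}: for $\zeta=e^{i\theta}\in\mathbb{T}\cap\overline{\Omega_t}$ the radial limit $L=\lim_{r\to1^-}\frac{\phi_t(\zeta)-\phi_t(r\zeta)}{(1-r)\phi_t(\zeta)}$ exists, is finite and lies in $[0,1)$. Then pure algebra finishes uniformly in $L$: since $|l_t|=1$, one has the identity $2\Re\Bigl[\frac{\phi_t(\zeta)-\phi_t(r\zeta)}{(1-r)\phi_t(\zeta)}\Bigr]=\frac{(1-|w_r|^2)+|l_t-w_r|^2}{1-r}$, and finiteness of $L$ gives $|l_t-w_r|=O(1-r)$, so the quadratic term vanishes in the limit and $\frac{1-|w_r|^2}{1-r}\to2L$; finally $-\ln r\sim1-r$. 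No case distinction, no nontangentiality, no angular derivative of $\eta_t$: the radial-limit statement of \cite{Bel-Ber} already contains everything, and your J--C detour, even once repaired, essentially re-derives it in a more roundabout way.
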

\begin{proof} 
Let $\theta\in[-\pi,\pi]$. According to \cite[Proposition 4.5]{Bel-Ber}, the limit
\begin{align*}
\lim_{r\rightarrow 1^-}\frac{\phi_t(e^{i\theta})-\phi_t(re^{i\theta})}{(1-r)\phi_t(e^{i\theta})}=\frac{e^{i\theta}}{\phi_t(e^{i\theta})}\phi_t'(e^{i\theta})\in[-\infty,1)
\end{align*}
exists and is non-negative when  $e^{i\theta}\in\mathbb{T}\cap\overline{\Omega_{t}}$.
  Hence, keeping in mind that $|\phi_t(e^{i\theta})|=1$, the assertion follows by the following elementary calculus.
  \begin{align*}
\lim_{r\rightarrow 1^-}\frac{1-|\phi_t(re^{i\theta})|^2}{-\ln r}
&=\lim_{r\rightarrow 1^-}\frac{(1-r)|\phi_t(e^{i\theta})+\phi_t(re^{i\theta})|^2}{-\ln r}\frac{1-|\phi_t(re^{i\theta})|^2}{(1-r)|\phi_t(e^{i\theta})+\phi_t(re^{i\theta})|2}
\\&=\lim_{r\rightarrow 1^-}\frac{(1-r)|\phi_t(e^{i\theta})+\phi_t(re^{i\theta})|^2}{-\ln r}\Re\left[\frac{\phi_t(e^{i\theta})-\phi_t(re^{i\theta})}{(1-r)\phi_t(e^{i\theta})}\right]
\\&=2|\phi_t(e^{i\theta})|^2\frac{e^{i\theta}}{\phi_t(e^{i\theta})}\phi_t'(e^{i\theta}).
\end{align*}
\end{proof}
From Lemma \ref{limit}, we have for fixed $\theta\in[-\pi,\pi]$
 \begin{align*}
\lim_{r\rightarrow 1^-}h_t(r,e^{i\theta})=&\lim_{r\rightarrow 1^-}\frac{\ln\left|\phi_t (re^{i\theta})\right|}{\ln r}
\\&=\lim_{r\rightarrow 1^-}\frac{\ln\left|\phi_t (re^{i\theta})\right|}{1-|\phi_t(re^{i\theta})|^2}\frac{1-|\phi_t(re^{i\theta})|^2}{\ln r}
\\&=\frac{e^{i\theta}}{\phi_t(e^{i\theta})}\phi_t'(e^{i\theta})\in[-\infty,1).
\end{align*}
Define $R_t:[-\pi,\pi]\rightarrow [0,1]$ and  $h_t:[-\pi,\pi]\rightarrow\mathbb{R}\cup\{-\infty\}$ as follows:
\begin{align*}
R_t(\theta)&=\sup\left\{r\in(0,1):h_t(r,e^{i\theta})>0\right\}
\\&=\sup\left\{r\in(0,1):\int_0^t\frac{1-|\phi_s(re^{i\theta})|^2}{-\ln r}\int_{\mathbb{T}}\frac{1}{|\zeta-\phi_s(re^{i\theta})|^2}d\nu_s(\zeta)ds<1\right\},
\end{align*}
and $h_t(\theta)=\lim_{r\rightarrow 1^-}h_t(r,e^{i\theta})$.
Let
\begin{align*}
I_{t}&=\left\{\theta\in[-\pi,\pi]:h_t(\theta)<0\right\}
=\left\{\theta\in[-\pi,\pi]:\frac{e^{i\theta}}{\phi_t(e^{i\theta})}\phi_t'(e^{i\theta})<0\right\}.
\end{align*}
The next result gives a description of $\Omega_{t}$  and its boundary. 
\begin{pro}
For any $t>0$, we have
\begin{enumerate}
\item $\Omega_{t}=\{re^{i\theta}: h_t(r,e^{i\theta})>0\}$ 
\item $\partial\Omega_{t}\cap\mathbb{D}=\{re^{i\theta}: h_t(r,e^{i\theta})=0\ {\rm and}\ \theta\in I_{t}\}$.
\item $\partial\Omega_{t}\cap\mathbb{T}=\{e^{i\theta}: h_t(r,e^{i\theta})=0\ {\rm and}\ \theta\in I_{t}^c\}$.
\end{enumerate}
\end{pro}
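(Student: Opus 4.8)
The plan is to read everything off the fundamental identity \eqref{logmod}, namely $\ln|\phi_t(re^{i\theta})| = (\ln r)\,h_t(r,e^{i\theta})$, which converts the geometric condition $\phi_t(re^{i\theta})\in\mathbb{D}$ into a sign condition on $h_t$. First I would record the crucial monotonicity in time: since $\partial_s\ln|\phi_s(z)| = \Re H(s,\phi_s(z)) > 0$ along the flow \eqref{Low1}, the modulus $s\mapsto|\phi_s(z)|$ is strictly increasing, so for $z=re^{i\theta}$ one has $z\in\Omega_t$ iff $t<T_z$ iff $|\phi_t(z)|<1$. As $r\in(0,1)$ gives $\ln r<0$, identity \eqref{logmod} yields $z\in\Omega_t \iff \ln|\phi_t(z)|<0 \iff h_t(r,e^{i\theta})>0$, which is precisely (1); one only notes that off $\overline{\Omega_t}$ the flow has already left $\mathbb{D}$ so $h_t$ is undefined or nonpositive there, whence $\{h_t>0\}$ cannot exceed $\Omega_t$.

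For the boundary I would treat the two pieces separately. If $z=re^{i\theta}\in\partial\Omega_{t}\cap\mathbb{D}$ then $r<1$ and, $\Omega_t$ being open with $z\notin\Omega_t$ but $z$ a limit of points of $\Omega_t$, continuity of $\phi_t$ forces $|\phi_t(z)|=1$, i.e. $\ln|\phi_t(z)|=0$; since $\ln r\neq0$ this gives $h_t(r,e^{i\theta})=0$. For the trace on the circle I would invoke Proposition \ref{Bercovici}(4) directly: a point $e^{i\theta}\in\mathbb{T}$ lies on $\partial\Omega_{t}$ precisely when $l_t=\phi_t(e^{i\theta})\in\mathbb{T}$ exists and $\tfrac{e^{i\theta}}{l_t}\phi_t'(e^{i\theta})\in[0,1)$. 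By the computation following Lemma \ref{limit}, this last quantity equals $h_t(\theta)=\lim_{r\to1^-}h_t(r,e^{i\theta})$, and Lemma \ref{limit} guarantees $h_t(\theta)<1$ always; hence the condition reduces to $h_t(\theta)\ge0$, i.e. $\theta\in I_{t}^c$, proving (3). The complementary angles, those with $h_t(\theta)<0$, are by definition $I_{t}$ and must account for the interior boundary arcs, giving (2).

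The step I expect to be the main obstacle is making the interior/boundary dichotomy rigorous: I must show that along each ray the exit from $\Omega_t$ occurs exactly once, so that the zero set of $r\mapsto h_t(r,e^{i\theta})$ detected above is genuinely the boundary, sitting in $\mathbb{D}$ exactly when $\theta\in I_{t}$ and on $\mathbb{T}$ exactly when $\theta\in I_{t}^c$. Concretely I need $\{r\in(0,1):h_t(r,e^{i\theta})>0\}$ to be an interval $(0,R_{t}(\theta))$, equivalently that $r\mapsto|\phi_t(re^{i\theta})|$ is increasing, equivalently that $\Omega_t$ is starlike with respect to $0$. Granting this, since $h_t(r,e^{i\theta})\to1$ as $r\to0^+$, an intermediate value argument locates $R_{t}(\theta)<1$ with $h_t(R_{t}(\theta),e^{i\theta})=0$ whenever $\theta\in I_{t}$, placing the boundary point in $\mathbb{D}$, whereas $\theta\in I_{t}^c$ forces $R_{t}(\theta)=1$ and sends the exit to $\mathbb{T}$, matching the Proposition \ref{Bercovici}(4) description used for (3). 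To establish the required monotonicity I would verify $\Re\bigl(z\phi_t'(z)/\phi_t(z)\bigr)>0$ on $\Omega_t$ (the analytic criterion for starlikeness via the inverse $\eta_t$): it holds at $t=0$, where $\phi_0=\mathrm{id}$ gives the value $1$, and I would attempt to propagate it along the L\"owner flow \eqref{Low1}, or else deduce it from the explicit formula for $\phi_t$ on the interval $(x(t)_-,x(t)_+)$ together with the decreasing property of $(\Omega_t)_{t>0}$ recorded above.
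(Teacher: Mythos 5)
Your first two paragraphs are, in substance, the paper's entire proof: the paper likewise reads (1) off the identity \eqref{logmod}, characterizes boundary points by $|\phi_t(re^{i\theta})|=1$, i.e.\ $h_t(r,e^{i\theta})=0$, and then settles the dichotomy between (2) and (3) in a single sentence, namely that ``the desired assertions follow since by definition of $I_t$: $\theta\in I_t$ if and only if $R_t(\theta)<1$'' --- which is the same combination of Proposition \ref{Bercovici}(4), Lemma \ref{limit} and the definition of $I_t$ that you invoke, packaged as a remark about the definitions. So the portion of your argument that is actually carried out coincides with what the paper does.

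The issue is your third paragraph. You are right that the statement, as written, needs the radial exit from $\Omega_t$ to be unique: if a ray touched $\partial\Omega_{t}$ at some $r<1$ and then re-entered $\Omega_t$ near $\mathbb{T}$, one would have $h_t(r,e^{i\theta})=0$ while $h_t(\theta)\in[0,1)$, i.e.\ $\theta\in I_t^{c}$, and the point $re^{i\theta}$ would violate (2). But you do not prove this single-crossing property; you only announce that you ``would attempt to propagate'' $\Re\bigl(z\phi_t'(z)/\phi_t(z)\bigr)>0$ along the L\"owner flow, or deduce it from the explicit formula. Neither plan is routine: starlikeness is \emph{not} preserved by general radial L\"owner evolutions (a driving point moving along $\mathbb{T}$ generates spiraling slits, whose complements are not starlike), so any propagation argument must use the specific structure of the measures $\nu_t$ (for instance their invariance under conjugation), and the closed formula recalled in Section 2 is only valid on $\mathbb{R}\cap\Omega_t=(x(t)_-,x(t)_+)$, hence controls nothing off the real axis. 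So, as submitted, your proof is incomplete at precisely the step you yourself flag as the main obstacle. It is only fair to add that the paper's proof never addresses this point either --- it implicitly identifies ``the ray meets $\partial\Omega_{t}\cap\mathbb{D}$'' with ``$R_t(\theta)<1$'', which is exactly the unique-exit property --- so what you have isolated is a gap (or at least an unstated assumption) in the paper itself; but your proposal does not close it.
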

\begin{proof}
From  \eqref{logmod}, we have 
\begin{align*}
re^{i\theta}\in\Omega_t \Leftrightarrow |\phi_t(re^{i\theta})|<1\Leftrightarrow h_t(r,e^{i\theta})>0 
\end{align*} 
which proves (1). Referring also to \eqref{logmod} and by Proposition \ref{Bercovici}, we have
\begin{align*}
re^{i\theta}\in\partial\Omega_{t} \Leftrightarrow |\phi_t(re^{i\theta})|=1  \Leftrightarrow  h_t(r,e^{i\theta})=0.
\end{align*}
Hence the desired assertions follow since we have by definition of $I_t$: $\theta\in I_{t}$ if and only if $R_t(\theta)<1$.
\end{proof}
\begin{rem}
Note that this description is analogous to the one obtained by Zhong in \cite[Theorem 3.2]{Zho} when $a=b=0$ (i.e. $\tau(P)=\tau(Q)=1/2$) where the measure $\nu_{t/2}$  becomes identical to the probability distribution of a free unitary Brownian motion with initial distribution $\nu_0$ (see \cite[Remark 4.7]{Tar}).
\end{rem}

\section{Proof of the main result}

Our approach to Theorem \ref{main} relies on a study of the boundary behavior of $K(t,.)$  for any $t>0$.
Form the identity $K(t,z)=K(0,\eta_t(z))$ together with the equality $\eta_t(\overline{\mathbb{D}})=\overline{\Omega_t} $, it suffices  to investigate the behavior of $K(0,.)$ on the boundary 
 $\partial{\Omega_{t}}\cap\mathbb{T}$ since  $K(0,.)$ is analytic in $\mathbb{D}$, and then it extends analytically to $\overline{\Omega_{t}}\cap\mathbb{D}$ for any $t>0$. Without loss of generality, we may restrict our study to a subset $V_t$  of $\Omega_t$ which does not meet the boundary $\partial{\Omega_{t}}\cap\mathbb{D}$ and  whose boundary in  $\mathbb{T}$ is exactly $\partial{\Omega_{t}}$. 
Recall that  the function $\phi_t$ is analytic in $\Omega_t$ and has a continuous extension to $\overline{\Omega_{t}}$ by Proposition \ref{Bercovici}.
The identity $\partial_z\phi_t(0)=e^t$ and  \eqref{exp} imply that
\begin{align}\label{flow1}
\frac{\phi_t(z)}{z}=\exp\left(\int_0^tH(s,\phi_s(z))ds\right)
\end{align}
extends to $z\in\overline{\Omega_t}$. Since
\begin{align*}
|\Im H(s,\phi_s(z))|&=\left| \int_{\mathbb{T}}\frac{2\Im(\overline{\xi}\phi_s(z))}{|\xi-\phi_s(z)|^2}d\nu_s(\xi) \right|
\\\leq& \int_{\mathbb{T}}\frac{2\left|\phi_s(z) \right|}{|\xi-\phi_s(z)|^2}d\nu_s(\xi)
\\=&\frac{2|\phi_s(z)|}{1-|\phi_s(z)|^2}\Re H(s,\phi_s(z)),
\end{align*}
we then have that
\begin{align*}
\left|\Im \int_0^tH(s,\phi_s(z))ds\right|\leq&\int_0^t\frac{2|\phi_s(z)|}{1-|\phi_s(z)|^2}\Re H(s,\phi_s(z))ds
\\\leq&\frac{2|\phi_t(z)|}{1-|\phi_t(z)|^2}\int_0^t\Re H(s,\phi_s(z))ds
\\\leq&\frac{-2|\phi_t(z)|\ln|z|}{1-|\phi_t(z)|^2}.
\end{align*}
where the second inequality follows from the fact that $|\phi_s(z)|\leq|\phi_t(z)|$ for $s\leq t$ and the last inequality is due to the definition of $R_t(\theta)$.
Let
\begin{equation*}
h_t(z):=\frac{-2|\phi_t(z)|\ln|z|}{1-|\phi_t(z)|^2},
\end{equation*}
then from Lemma \ref{limit}, $h_t(.)$ is bounded on $V_t$ for $t\geq0$. In particular, for $t=0$ we have
\begin{align*}
|h_0(z)|=\frac{-2|z|\ln|z|}{1-|z|^2}\leq1.
\end{align*}
Therefore there is some $t_0>0$ such that $|h_t(z)|<\pi$, for $t\in[0,t_0)$ 
 and thus, we can take logarithms in both sides of \eqref{flow1}. This implies that the exponent $\int_0^tH(s,\phi_s(z))ds$ in the right-hand side of \eqref{flow1} has a continuous extension to $\overline{V_t}\cap\mathbb{T}$ for $t\in(0,t_0)$.
Next, we can  apply integration by parts to write
\begin{align}\label{iip}
tH(t,\phi_t(z))=\int_0^tH(s,\phi_s(z))ds +\int_0^t s\partial_s[H(s,\phi_s(z))]ds,
\end{align}
where we recall from   \eqref{Low2} that
\begin{align*}
s\partial_s[H(s,\phi_s(z))]= 2s\phi_s(z)\left(b^2\frac{1+\phi_s(z)}{(1-\phi_s(z))^3}-a^2\frac{1-\phi_s(z)}{(1+\phi_s(z))^3}\right).
\end{align*}
The function $\phi_s(z)$ is  continuous jointly in both variables for $s\in[0,t]$ and $z\in\overline{V_t}$ since we have $\overline{V_t}\subset\overline{\Omega_t}\subset\overline{\Omega_s}$, hence so does $s\partial_s[H(s,\phi_s(z))]$ too since $\phi_s(z)\neq\pm1$. Thus, $\int_0^t s\partial_s[H(s,\phi_s(z))]ds$ is a continuous function of $z$ on $ \overline{V_t}$
as consequence of the theorem of continuity under integral sign. It follows from \eqref{iip} that $H(t,\phi_t(z))$ extends continuously to $\overline{V_t}\cap\mathbb{T}$.
Now, the identity $K(t,\phi_t(z))=K(0,z)$, implies that $H(t,\phi_t(z))$ rewrites as $\sqrt{K(0,z)^2+\Phi_t(z)^2}$ with
\begin{align*}
\Phi_t(z)=a\frac{1-\phi_t(z)}{1+\phi_t(z)}+b\frac{1+\phi_t(z)}{1-\phi_t(z)}.
\end{align*}
Since   $ \Phi_t(z)$ has a continuous extension to the boundary $\mathbb{T}\cap\overline{V_t}$ by Corollary \ref{Phi}, we deduce that $K(0,.)$ extends continuously to $\mathbb{T}\cap\overline{V_t}$ for every $t\in(0,t_0)$.
\begin{rem}
We notice that $\overline{V_t}\cap\mathbb{T}=\overline{\Omega_t}\cap\mathbb{T}$. Since $(\Omega_t)_{t>0}$  is decreasing on $\mathbb{D}$, $K(0,.)$ extends continuously to $\overline{\Omega_t}\cap\mathbb{T}$ for every $t\geq t_0$. 
\end{rem} 
The discussions so far are summarized as follows.

\begin{pro}\label{extension}
For every $t>0$, the function $K(0,.)$ extends analytically to $\partial{\Omega_{t}}\cap\mathbb{D}$ and has a continuous extension to $\partial{\Omega_{t}}\cap\mathbb{T}$.
\end{pro}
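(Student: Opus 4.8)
The plan is to treat the two claims separately, since the boundary $\partial\Omega_t$ splits naturally into its portion $\partial\Omega_t\cap\mathbb{D}$ lying strictly inside the open disc and its portion $\partial\Omega_t\cap\mathbb{T}$ lying on the unit circle. The analytic extension to $\partial\Omega_t\cap\mathbb{D}$ requires essentially no work: every point of $\partial\Omega_t\cap\mathbb{D}$ is an interior point of $\mathbb{D}$, and $K(0,\cdot)$ is already analytic throughout $\mathbb{D}$, hence in particular analytic in a neighborhood of each such point. All the difficulty is therefore concentrated in producing a continuous extension of $K(0,\cdot)$ to $\partial\Omega_t\cap\mathbb{T}$, and it is there that I would direct the argument.

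For the boundary piece on $\mathbb{T}$ I would exploit the subordination identity $K(t,\phi_t(z))=K(0,z)$ in the form
\begin{equation*}
K(0,z)^2=H(t,\phi_t(z))^2-\Phi_t(z)^2,
\end{equation*}
which follows from the definition of $K$ together with $H(t,\phi_t(z))^2=K(0,z)^2+\Phi_t(z)^2$. By Corollary \ref{Phi} the term $\Phi_t(z)$ already extends continuously to $\mathbb{T}\cap\overline{\Omega_t}$, so the entire problem reduces to showing that $H(t,\phi_t(z))$ admits a continuous extension to the boundary. Once this is established, $K(0,z)^2$ extends continuously, and since $K(0,\cdot)$ is a fixed analytic branch of the square root in $\mathbb{D}$, the extension of $K(0,\cdot)$ itself follows.

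To handle $H(t,\phi_t(z))$ I would use the integration-by-parts formula \eqref{iip} and treat its two integrals separately. For the second integral, the integrand is given explicitly by \eqref{Low2} and is a smooth expression in $\phi_s(z)$ whose only singularities occur at $\phi_s(z)=\pm1$; since the boundary $\partial\Omega_t$ avoids the points $\pm1$ (the fact underlying Corollary \ref{Phi}), the integrand is jointly continuous in $(s,z)$ up to the boundary, and continuity of the integral follows from continuity under the integral sign. The first integral is the delicate one: from the exponential representation \eqref{flow1} one knows that $\phi_t(z)/z=\exp(\int_0^tH(s,\phi_s(z))\,ds)$ extends continuously to $\overline{\Omega_t}$, but recovering the exponent requires taking a logarithm, and for that one must control its argument. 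The needed control comes from the bound on $|\Im\int_0^tH(s,\phi_s(z))\,ds|$ by $h_t(z)$, which Lemma \ref{limit} shows to be bounded on $V_t$.

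The main obstacle, accordingly, is the branch selection for the logarithm: the estimate only guarantees $|h_t(z)|<\pi$ for $t$ in some interval $[0,t_0)$, so the logarithm can be taken unambiguously and the exponent extends continuously only for small $t$. To reach all $t>0$ I would then bootstrap using the monotonicity $\Omega_t\subset\Omega_s$ for $s<t$ together with the observation that $\overline{V_t}\cap\mathbb{T}=\overline{\Omega_t}\cap\mathbb{T}$; since $(\Omega_t)_{t>0}$ is decreasing, the boundary data on $\mathbb{T}$ for large $t$ is controlled by the data already obtained for small $t$, and the continuous extension propagates to every $t\geq t_0$. Combining the two boundary pieces then yields both the analytic extension to $\partial\Omega_t\cap\mathbb{D}$ and the continuous extension to $\partial\Omega_t\cap\mathbb{T}$, as claimed.
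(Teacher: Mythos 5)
Your proposal is correct and follows essentially the same route as the paper: the trivial analytic extension at points of $\partial\Omega_t\cap\mathbb{D}$, the reduction of the boundary problem via $K(0,z)^2=H(t,\phi_t(z))^2-\Phi_t(z)^2$ and Corollary \ref{Phi}, the treatment of $H(t,\phi_t(z))$ through the integration-by-parts identity \eqref{iip} with the logarithm of \eqref{flow1} controlled by the bound $h_t$ and Lemma \ref{limit} for $t\in(0,t_0)$, and finally the monotonicity of $(\Omega_t)_{t>0}$ to propagate the extension to all $t>0$. This matches the paper's own argument step for step.
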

This bring us to the proof of the main result.

\begin{proof}[Proof of Theorem \ref{main}]
Form the identity $K(t,z)=K(0,\eta_t(z))$ and the fact that $\eta_t(\overline{\mathbb{D}})=\overline{\Omega_t} $ together with Proposition \ref{extension}, we deduce that the function $K(t,.)$ has a continuous extension to $\overline{\mathbb{D}}$, for any $t>0$. Thus, $K(t,.)$ becomes  a function of Hardy class $H^{\infty}(\mathbb{D})$  for every $t>0$ and the desired result follows easily from \cite[ Lemma 5.1]{Tar}.
\end{proof}

\end{document}